\def\qed{\hfill {\hbox{${\vcenter{\vbox{               
   \hrule height 0.4pt\hbox{\vrule width 0.4pt height 6pt
   \kern5pt\vrule width 0.4pt}\hrule height 0.4pt}}}$}}}
\def\tr{\triangleright}
\newtheorem{theorem}{Theorem}
\newtheorem{definition}{Definition}
\newtheorem{lemma}[theorem]{Lemma}
\newtheorem{corollary}[theorem]{Corollary}
\newtheorem{example}{Example}
\newtheorem{remark}{Remark}
\newenvironment{proof}[1][Proof]{\smallskip\noindent{\bf #1.}\quad}%
{\qed\par\medskip}
\date{}
\title{\Large \textbf{N-Degeneracy in rack homology and link invariants}}
\author{Mohamed Elhamdadi\footnote{Email: \texttt{emohamed@math.usf.edu}}
 \and Sam Nelson\footnote{Email: \texttt{knots@esotericka.org}}}
\begin{document}
\maketitle

\begin{abstract}
The aim of this paper is to define a homology theory for racks with 
finite rank $N$ and use it to define invariants of knots
generalizing the CJKLS 2-cocycle invariants related to the invariants
defined in \cite{N1}. For this purpose, we prove that $N$-degenerate 
chains form a sub-complex of the classical complex defining rack homology.
If a rack has rack rank $N=1$ then it is a quandle and our homology theory
coincides with the CKJLS homology theory \cite{CJKLS}. Nontrivial
cocycles are used to define invariants of knots and examples of calculations
for classical knots with up to $8$ crossings and classical links with up 
to $7$ crossings are provided.
\end{abstract}

\textsc{Keywords:} Finite racks, rack homology, enhancements of counting 
invariants, cocycle invariants

\textsc{2000 MSC:} 57M27, 57M25

\section{\large\textbf{Introduction}}

Racks are algebraic structures with axioms derived from Reidemeister moves 
type II and type III. They have been considered by knot theorists in order 
to construct knot and link invariants and their higher analogues (see for 
example \cite{CKS} and references therein). Racks allow a refined and a 
complete algebraic framework in which ones investigates links and 
$3$-manifolds.  They have been studied by many authors and appeared in the 
literature with different names such as automorphic sets and in a special 
case quandles, distributive groupoids, crystals etc. Rack cohomology was 
introduced by Fenn, Rourke and Sanderson \cite{FRS1}.  For each rack $X$ 
and abelian group $A$, they defined cohomology groups $H^n(X,A)$. Since 
then there has been a number of results about this cohomology (see \cite{ EG}, 
\cite{LN} and \cite{FRS2}) with studies from different perspectives. A 
modification of rack cohomology theory led to quandle cohomology theory 
which was developed in \cite{CJKLS} in order to define invariants of classical 
knots and knotted surfaces in state-sum form, called  quandle cocycle 
(knot) invariants.  These invariants can be understood as enhancements of 
the quandle counting invariant, where quandle colorings of a link diagram 
are counted with a weight determined by a quandle cocycle. 

In \cite{CES}, the quandle homology theory was generalized to the case when 
the coefficient groups admit  the structure of Alexander quandles, by 
including  an action of the infinite cyclic group in the boundary operator. 
Using generalizations of quandle homology theory provided by  Andruskiewitsch 
and Gra\~{n}a  \cite{AG}, the quandle cocycle invariants were generalized 
in three different directions in \cite{CEGS}. The second author has studied 
several enhancements of quandle counting invariants of a link $L$ with 
respect to a finite target quandle $T$ (see for example \cite{N1} and 
\cite{AdamN}).  \\

The paper is organized as follows.  After reviewing the basics of racks and 
recalling the notion of rank for racks in section~2,  rack homology is 
considered in section 3.  As our main result, we prove that the degenerate 
chains form a sub-complex of the classical complex defining rack homology
and define a homology theory analogous to quandle 
homology for non-quandle racks which reduces to the usual quandle homology
when the rack is a quandle. In Section 4 we use cocycles in $H^2_{R/ND}$  to 
enhance the rack counting invariant, obtaining a family of link invariants 
which generalize the CJKLS invariants to allow non-quandle racks.  Note that
while a related invariant was defined in \cite{N1}, the fact that 
$N$-degenerate chains form a subcomplex was not proved in \cite{N1}, only 
invariance under blackboard framed isotopy and the $N$-phone cord move. 
In section 5 explicit reduced cocycles are given and used to perform 
computations of the invariants for prime knots and links.  
We end the paper with section 6 in which we make some 
remarks and suggest some open questions for future investigations. 

\section{\large\textbf{Review of Racks}}\label{}

We begin with a definition from \cite{FR}, give examples and then recall from
\cite{N1} the notion of rank for racks.

\begin{definition}
\textup{A \textit{rack} is a set $X$ with two binary operations $\tr$ and
$\tr^{-1}$ satisfying for all $x,y,z\in X$}
\begin{list}{}{}
\item[\textup{(i)}]{$(x\tr y)\tr^{-1} y = x = (x\tr^{-1} y)\tr y$} \textup{and}
\item[\textup{(ii)}]{$(x\tr y)\tr z=(x\tr z)\tr(y\tr z)$}.
\end{list}
\textup{A rack which further satisfies $x\tr x=x$ for all $x\in X$ is 
a \textit{quandle}.}
\end{definition}

\begin{example}\label{ex1}
\textup{Let $V$ be any $\mathbb{Z}_4$-module and define} 
\[\mathbf{u}\tr\mathbf{v}=\mathbf{u}+2\mathbf{v}.\]
\textup{Then $V$ is a rack. The inverse operation
$\mathbf{u}\tr^{-1}\mathbf{v}=\mathbf{u}+2\mathbf{v}$ coincides with the
triangle operation, and we have}
\[
(\mathbf{u}\tr \mathbf{v})\tr \mathbf{w} 
=(\mathbf{u}+ 2\mathbf{v})+2\mathbf{w}
=\mathbf{u}+ 2\mathbf{v}+2\mathbf{w},
\]
\textup{while}
\[
(\mathbf{u}\tr \mathbf{w})\tr (\mathbf{v}\tr \mathbf{w}) 
= (\mathbf{u}+ 2\mathbf{w})+2(\mathbf{v}+2\mathbf{w})
= \mathbf{u}+ 2\mathbf{v}+6\mathbf{w}
= \mathbf{u}+ 2\mathbf{v}+2\mathbf{w}.
\]
\end{example}

\begin{example}
\textup{More generally, let $V$ be any $\ddot{\Lambda}$-module where 
$\ddot{\Lambda}=\mathbb{Z}[t^{\pm 1},s]/(s^2-(1-t)s)$. Then $V$ is a rack
with rack operations}
\[
\mathbf{u}\tr\mathbf{v}=t\mathbf{u}+s\mathbf{v} \quad \mathrm{and}
\quad \mathbf{u}\tr ^{-1}\mathbf{v}=t^{-1}(\mathbf{u}-s\mathbf{v}) 
\]
\textup{since}
\[
(\mathbf{u}\tr \mathbf{v})\tr \mathbf{w} 
=t(t\mathbf{u}+s \mathbf{v})+ s\mathbf{w}
=t^2\mathbf{u}+ts\mathbf{v}+ s\mathbf{w}
\]
\textup{while}
\[
(\mathbf{u}\tr \mathbf{w})\tr (\mathbf{v}\tr \mathbf{w})
=t(t\mathbf{u}+ s\mathbf{w})+ s(t\mathbf{v}+ s\mathbf{w})
=t^2\mathbf{u}+st\mathbf{v}+ (st+s^2)\mathbf{w}
\]
\textup{and since $s^2=(1-t)s$ we have $s=st+s^2$. Racks of this
type are known as \textit{$(t,s)$-racks} in \cite{FR}. Setting 
$s=1-t$ yields a quandle known as an \textit{Alexander quandle}. 
In particular, the rack in example \ref{ex1} is a $(t,s)$-rack with
$t=1$ and $s=2$.}
\end{example}

The rack operations are generally non-associative, with right 
self-distributivity taking the place of associativity. It is a standard 
exercise to show that in any rack $X$ we also have
\begin{list}{$\bullet$}{}
\item{$(x\tr^{-1} y)\tr^{-1} z=(x\tr^{-1} z)\tr^{-1}(y\tr^{-1} z)$,}
\item{$(x\tr y)\tr^{-1} z=(x\tr^{-1} z)\tr(y\tr^{-1} z)$, and}
\item{$(x\tr^{-1} y)\tr z=(x\tr z)\tr^{-1}(y\tr z)$.}
\end{list}

To minimize parentheses, any rack word not containing parentheses will
be associated left-to-right, so that
\[ x_1\tr x_2\tr x_3 \dots \tr x_n = 
(\dots((x_1\tr x_2)\tr x_3)\dots )\tr x_n.\]
In particular, the expression $x\tr^n y$ is an abbreviation for
\[(\dots((x\tr y)\tr y)\dots )\tr y\]
where we have $n$ total $\tr$s.

A similar notion is a \textit{rack power} $x^{\tr k}$, defined recursively
by the rules
\begin{list}{}{}
\item[(i)]{$x^{\tr 1}=x\tr x$} and
\item[(ii)]{$x^{\tr(k+1)}=x^{\tr k}\tr x^{\tr k}$}.
\end{list}
The map $\pi:X\to X$ given by $\pi(x)=x\tr x$ is a bijection
known as a \textit{kink map}; we have $x^{\tr k}=\pi^{k}(x)$.

Given any $x\in X$, we can ask what is the minimal $N\in \mathbb{N}$ such 
that $x^{\tr N} = x$.

\begin{definition}
\textup{Let $X$ be a rack. For each $x\in X$, let $N(x)$ be the smallest
integer $N(x)\in \mathbb{N}$ such that $x^{\tr N} = x$, or $\infty$ if there
is no such $N(x)$. Then the \textit{rack rank} of $X$ is}
\[N(X)=\mathrm{lcm}\{N(x)\ |\ x\in X\}.\]
\textup{If any $N(x)=\infty$ then we have $N(X)=\infty$.}
\end{definition}
\noindent
Note that a quandle is a rack with rack rank $N(X)=1$. We will often 
write $N$ in place of $N(X)$ when the rack in question is understood. 
The rack rank is analogous to the characteristic of a
field or ring; indeed, we might consider the alternative term ``rack 
characteristic.''

For ease of computation, a finite rack $X=\{x_1,\dots, x_n\}$ can be 
represented with a \textit{rack matrix} $M_X$ whose $(i,j)$ entry is 
$k$ where $x_k=x_i\tr x_j$. That is, the rack matrix encodes the
operation table of $(X,\tr)$. Note that the operation table of $(X,\tr^{-1})$
can be recovered from $M_X$, so we do not need to specify both operation
tables to determine a rack structure on $X$. As observed in \cite{N1}, 
every finite rack has finite rack rank equal to the order in $S_n$ of the 
permutation $\pi$ given by the diagonal of the rack matrix. 

\begin{example}\label{example3}
\textup{
Let $X=\mathbb{Z}_4$ with $\mathbf{u}\tr\mathbf{v}=\mathbf{u}+2\mathbf{v}$. 
If we write $\{x_1=1,\ x_2=2, \ x_3=3,\ x_4=4\}$ then the matrix of $X$ is}
\[M_X=\left[\begin{array}{rrrr}
3 & 1 & 3 & 1 \\
4 & 2 & 4 & 2 \\
1 & 3 & 1 & 3 \\
2 & 4 & 2 & 4 \\
\end{array}\right]\]
\textup{Since the diagonal permutation is the transposition $\pi=(13)\in S_4$, 
this rack has rack rank $N=2.$}
\end{example}

Every framed oriented link $L$ has a \textit{fundamental rack} $FR(L)$ 
with generators corresponding to arcs in a blackboard-framed diagram of 
$L$ and relations determined at crossings as pictured:

\[\includegraphics{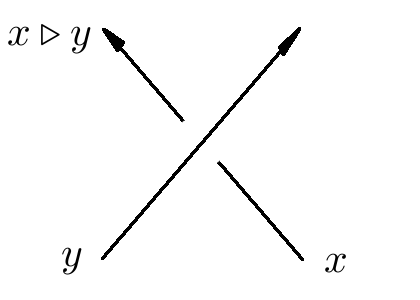} \quad \includegraphics{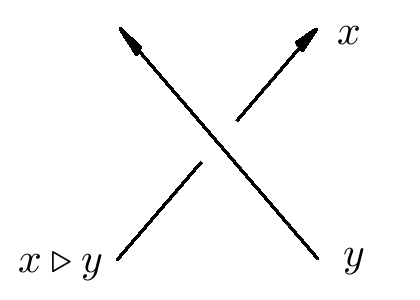}\]

\begin{example}\textup{The blackboard-framed oriented knot below has
listed fundamental rack presentation.}
\[\raisebox{-0.5in}{\includegraphics{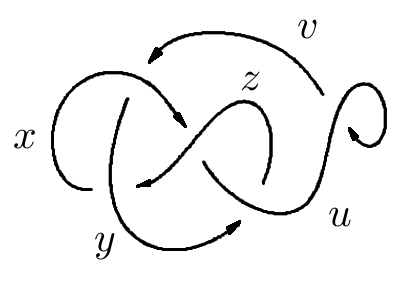}}
 \quad FR(L)=\langle x,y,z,u,v\ | \ u\tr u=v,
y\tr x=v,\ x\tr y= z,\ y\tr u = z,\ x\tr z=u\rangle.\]
\end{example}

The rack axioms encode blackboard-framed isotopy, so that two link diagrams
which are framed isotopic have isomorphic fundamental racks. Indeed, in
\cite{FR} it is shown that the fundamental rack is a complete invariant of
unsplit oriented links in homology 3-spheres.

\[
\raisebox{-0.5in}{\includegraphics{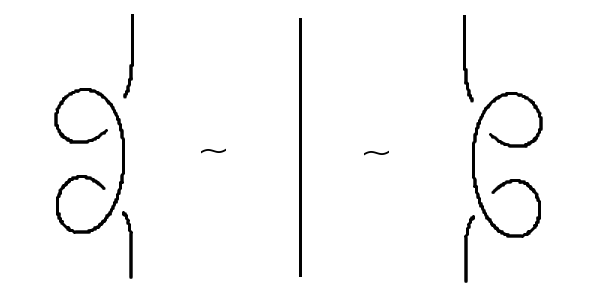}}; \quad 
\raisebox{-0.5in}{\includegraphics{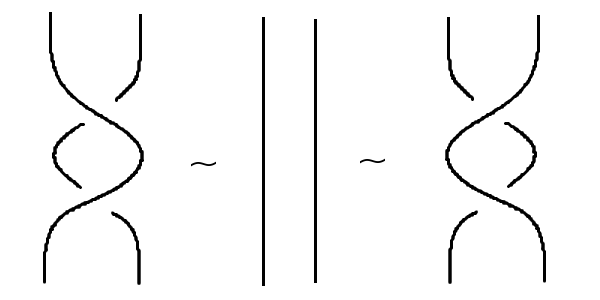}};
\raisebox{-0.5in}{\includegraphics{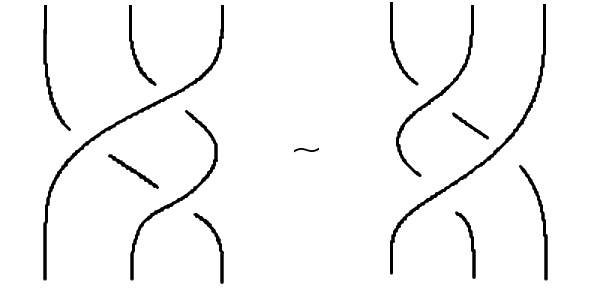}}\]

For any rack $X$, a rack homomorphism $f:FR(L)\to X$ can be visualized as
a \textit{coloring} or labeling of a diagram $L$ by elements of $X$ where
each arc, say $x$, in $L$ gets a label $f(x)\in X$. The set of homomorphisms 
$\mathrm{Hom}(FR(L),X)$
is invariant under blackboard-framed isotopy. By summing these numbers of 
colorings over a complete set of framings modulo $N$, we obtain an invariant
of ambient isotopy (\cite{N1}). 

\begin{theorem}
Let $X$ be a finite rack with rack rank $N$, $L=\cup_{k=1}^cL_k$ a link with 
$c$ components, $W=(\mathbb{Z}_N)^c$, and $FR(L,\mathbf{w})$ the fundamental 
rack of $L$ with framing vector $\mathbf{w}=(w_1,\dots, w_c)\in W$. Then the sum
\[\Phi^{\mathbb{Z}}_X(L)=\sum_{\mathbf{w}\in W} |\mathrm{Hom}(FR(L,\mathbf{w}),X|\]
is an invariant of links, known as the \textit{integral rack counting 
invariant}.
\end{theorem}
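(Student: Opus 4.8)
The plan is to show that the quantity $\Phi^{\mathbb{Z}}_X(L)$ is unchanged under the three Reidemeister moves, using as a black box the fact (from \cite{N1}, recalled just above) that the set $\mathrm{Hom}(FR(L,\mathbf{w}),X)$ — and hence its cardinality — is an invariant of blackboard-framed isotopy, i.e. is unchanged under Reidemeister moves II and III and under the \emph{framed} Reidemeister I move (the double-kink or ``$N$-phone cord'' type move that preserves the writhe). Since the blackboard-framed isotopy class of a diagram with a given framing vector $\mathbf{w}$ is already an invariant, the only move left to check is the ordinary Reidemeister I move, which adds or removes a single kink and thereby changes the framing of exactly one component by $\pm 1$.

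First I would fix a component $L_k$ and analyze the effect of adding a positive kink on an arc of $L_k$. Adding such a kink replaces an arc colored by $x$ with two arcs, the new short arc carrying the color $x \tr x = \pi(x)$, and changes the framing $w_k$ by $+1$. The key algebraic observation is that the kink map $\pi \colon X \to X$, $\pi(x) = x\tr x$, is a bijection (stated in the excerpt), so this local modification induces a \emph{bijection}
\[
\mathrm{Hom}(FR(L,\mathbf{w}),X) \;\xrightarrow{\ \sim\ }\; \mathrm{Hom}(FR(L,\mathbf{w}+\mathbf{e}_k),X),
\]
where $\mathbf{e}_k$ is the $k$-th standard basis vector of $\mathbb{Z}^c$: a coloring of the kinked diagram is completely determined by a coloring of the un-kinked diagram, with the new arc forced to take the value $\pi$ of the adjacent color, and conversely. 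Hence $|\mathrm{Hom}(FR(L,\mathbf{w}),X)| = |\mathrm{Hom}(FR(L,\mathbf{w}+\mathbf{e}_k),X)|$ for every $\mathbf{w} \in \mathbb{Z}^c$, and the same holds for negative kinks via $\pi^{-1}$.

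Next I would assemble these local statements into the global invariance of the sum. Because $X$ has rack rank $N$, we have $\pi^N = \mathrm{id}$ on $X$, so $|\mathrm{Hom}(FR(L,\mathbf{w}),X)|$ depends only on $\mathbf{w}$ modulo $N$ in each coordinate; this is what makes the sum over $W = (\mathbb{Z}_N)^c$ well-defined (each summand is the common value over a coset). A single Reidemeister I move on component $k$ shifts $w_k \mapsto w_k \pm 1$, which permutes the residues modulo $N$ in the $k$-th coordinate cyclically; since we sum over \emph{all} residues $\mathbf{w} \in W$, and each summand is unchanged under this reindexing by the bijection above, the total sum is invariant. Combined with the already-known invariance under RII, RIII, and the framed RI, this shows $\Phi^{\mathbb{Z}}_X(L)$ is an ambient isotopy invariant.

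The main obstacle — really the only substantive point — is verifying carefully that the kink move induces the claimed bijection on colorings, i.e. that reading off the crossing relation at the new kink crossing forces exactly the constraint ``new arc $= \pi(\text{old arc})$'' and imposes no further conditions, so that colorings on the two sides are in one-to-one correspondence with the new framing; one must check the orientations and the sign of the kink so that the relation is $x\tr x$ rather than something else, and handle the negative-kink case symmetrically using $\tr^{-1}$. Everything else is bookkeeping: tracking which coordinate of the framing vector changes, and observing that summing a reindexed family over a full set of coset representatives does not change the sum.
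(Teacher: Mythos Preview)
Your argument is correct and matches the paper's own justification, which is only a one-sentence sketch: the paper simply observes that colorings are preserved by blackboard-framed moves and by the $N$-phone cord move, so summing over a full set of framings mod $N$ yields an ambient isotopy invariant. Your version makes the mechanism more explicit by using the single-kink bijection $\mathrm{Hom}(FR(L,\mathbf{w}),X)\cong\mathrm{Hom}(FR(L,\mathbf{w}+\mathbf{e}_k),X)$ and then noting that a Reidemeister~I move merely cyclically reindexes the summands over $W=(\mathbb{Z}_N)^c$; this is the same idea unpacked. One small terminological slip: in your parenthetical you conflate the framed Reidemeister~I move (a cancelling pair of opposite kinks, which preserves writhe) with the $N$-phone cord move (which adds $N$ like-signed kinks and changes writhe by $N$); only the former is part of blackboard-framed isotopy, but this does not affect your argument.
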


The theorem follows from the observation that while colorings of link diagrams
by racks are not preserved under Reidemeister I moves, they are preserved
by the \textit{$N$-phone cord move} where $N$ is the rack rank of $X$:
\[\includegraphics{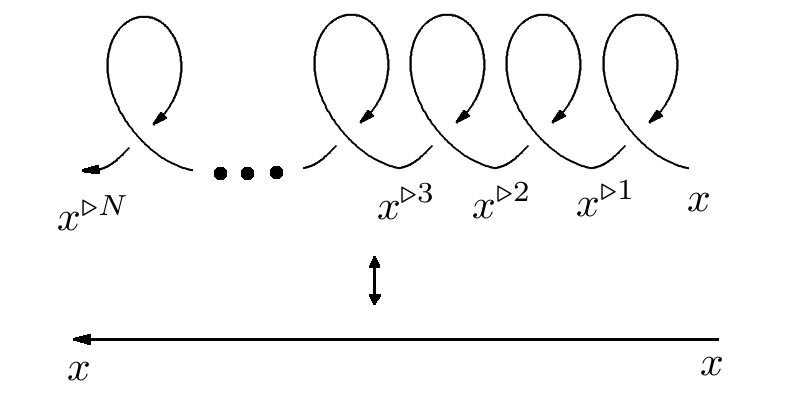}\]
Thus, if two links are equivalent by blackboard-framed moves and $N$-phone cord
moves, their sets of colorings by a rack $X$ with rack rank $N$ are in
one-to-one correspondence.

We can compute the set of rack colorings of a link $L$ by putting the link in
braid form, assigning a generator to each strand at the top and pushing
the colors down the braid; closing the braid, we obtain a system of equations 
with one equation for each strand. We must repeat this computation with $N$ 
stabilization moves on each component to get the total set of colorings.

\[\includegraphics{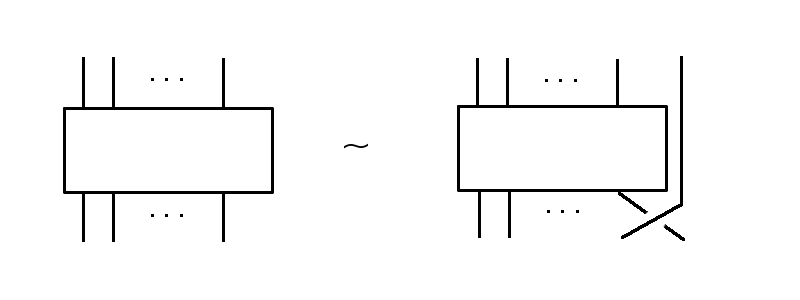}\]

\begin{example}
\textup{The rack in example \ref{example3} has rack rank $N=2$.
The Hopf link has braid presentations with the four possible writhe 
vectors in $W=(\mathbb{Z}_2)^2$ as pictured below; closing each braid
gives us the listed system of equations in $X$. The total number of 
solutions is the integral rack counting invariant $\Phi_X^{\mathbb{Z}}$,
so we have $\Phi_X^{\mathbb{Z}}(L)=4+4+4+8=20$.}
\[\begin{array}{|c|cccc|} \hline
\mathbf{w} &
(0,0) & (1,0) & (0,1) & (1,1) \\ \hline
\raisebox{0.5in}{\textup{Braid Diagram}} &
\includegraphics{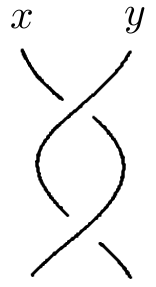} &
\includegraphics{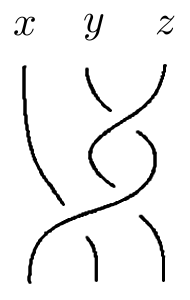} &
\includegraphics{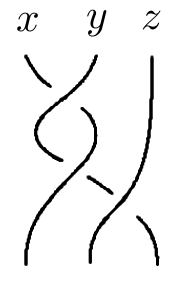} &
\includegraphics{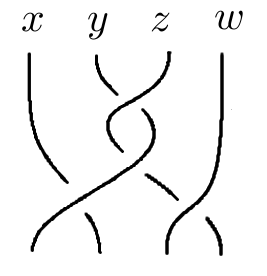} \\ \hline
\mathrm{Equations} &
\begin{array}{rcl}
2x & = & 0\\
2y & = & 0
\end{array} &
\begin{array}{rcl}
2x & = & 0 \\
y  & = & x \\
2z & = & 0 \\
\end{array} &
\begin{array}{rcl}
2x & = & 0 \\
y  & = & z \\
2y & = & 0 \\
\end{array} &
\begin{array}{rcl}
 x & = & 3y \\
 w & = & z \\
2y & = & 2z \\
\end{array} \\ \hline
\# \ \mathrm{solutions} &
4  &
4  &
4  &
8  \\ \hline
\end{array} 
\]
\end{example}

\section{\large\textbf{Rack homology}}

Let $X$ be a finite rack with rack rank $N$ and let $C_n^R(X)$ be 
the free abelian group generated by $n$-tuples $(x_1, \dots, x_n)$ of elements 
of $X$; for $n<1$ set $C_n^R(X)=\{0\}$. Recall that for an $n$-chain, the differential 
$\partial_n:C^R_n(X)\to C^R_{n-1}(X)$ (see \cite{CJKLS} for example) 
is defined on the generator $(x_1,\dots,x_n)$ by
\begin{eqnarray*}
\partial_{n}(x_1, x_2, \dots, x_n) & = &
\sum_{i=2}^{n} (-1)^{i}\left[ (x_1, x_2, \dots, x_{i-1}, x_{i+1},\dots, x_n)\right. \\
& & \quad \quad - \left. (x_1 \tr x_i, x_2 \tr x_i, \dots, x_{i-1}\tr x_i, x_{i+1}, \dots, x_n) \right]
\end{eqnarray*}
for $n \geq 2$ and $\partial_n=0$ for $n \leq 1$ and extended to 
$C^R_n(X)$ by linearity.

\begin{definition}\label{Ndegenerate}
\textup{Let $X$ be a finite rack of rack rank $N$ and let 
$n\geq 2$ be an integer.  A rack $n$-chain $\mathbf{x}\in C^R_n(X)$ is 
\textit{$N$-degenerate}
if $\mathbf{x}$ is a linear combination of chains of the form}
\[\sum_{k=1}^N 
(x_1,\dots, x_{i-1},x_i^{\tr k}, x_i^{\tr (k+1)}, x_{i+2}, \dots, x_n).\]
\end{definition}

We now come to our main result:

\begin{theorem}
Let $X$ be a finite rack with rack rank $N$. Then the $N$-degenerate $n$-chains
form a subcomplex of the complex $C_*^R(X)$.
\end{theorem}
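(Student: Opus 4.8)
The plan is to imitate the classical proof that degenerate chains form a subcomplex in quandle homology, but with two modifications forced by the rack setting: the boundary of a single degeneracy generator will involve the whole kink orbit $x,x^{\tr 1},x^{\tr 2},\dots$, and the cancellations will rely on the fact that left multiplication $\beta_c\colon x\mapsto x\tr c$ depends on $c$ only through its kink orbit. First I would fix bookkeeping. For $1\le i\le n$ set $d_i^{0}(x_1,\dots,x_n)=(x_1,\dots,\widehat{x_i},\dots,x_n)$ and $d_i^{1}(x_1,\dots,x_n)=(x_1\tr x_i,\dots,x_{i-1}\tr x_i,x_{i+1},\dots,x_n)$, so that $\partial_n=\sum_{i=2}^n(-1)^i(d_i^{0}-d_i^{1})$ (the $i=1$ summand being identically zero). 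For $1\le j\le n-1$ let $s_j\colon C^R_{n-1}(X)\to C^R_n(X)$ be the linear degeneracy map $s_j(y_1,\dots,y_{n-1})=\sum_{k=1}^N(y_1,\dots,y_{j-1},y_j^{\tr k},y_j^{\tr(k+1)},y_{j+1},\dots,y_{n-1})$. By Definition~\ref{Ndegenerate}, the $N$-degenerate subgroup of $C^R_n(X)$ is exactly the span of the images of the $s_j$, and since $\partial_n$ is linear it suffices to show that $\partial_n\bigl(s_j(\mathbf y)\bigr)$ is $N$-degenerate for every $j$ and every $(n-1)$-tuple $\mathbf y$.

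Second, I would record the two elementary rack identities that make everything work. Self-distributivity gives $\pi(z)\tr c=(z\tr z)\tr c=(z\tr c)\tr(z\tr c)=\pi(z\tr c)$, so $\pi$ commutes with every $\beta_c$ and hence $z^{\tr k}\tr c=(z\tr c)^{\tr k}$ for all $k\ge 0$. On the other hand, self-distributivity together with axiom (i) gives $z\tr\pi(y)=\bigl((z\tr^{-1}y)\tr y\bigr)\tr y=z\tr y$, and inductively $z\tr y^{\tr k}=z\tr y$ for all $k\ge 0$; in particular $y^{\tr k}\tr y^{\tr(k+1)}=y^{\tr k}\tr y=y^{\tr(k+1)}$. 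In words: left multiplication by any element of a kink orbit of $y$ agrees with left multiplication by $y$. Finally, because $N=N(X)$ is the rack rank we have $\pi^N=\mathrm{id}_X$, so $y^{\tr(N+1)}=y^{\tr 1}$ and the sequence $(y^{\tr 1},\dots,y^{\tr N})$ is a cyclic shift of $(y^{\tr 2},\dots,y^{\tr(N+1)})$.

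Third, the computation itself. Expanding $\partial_n\bigl(s_j(\mathbf y)\bigr)=\sum_{i=2}^n(-1)^i(d_i^0-d_i^1)s_j(\mathbf y)$, I would split the sum by the position of $i$ relative to the inserted pair at slots $j,j+1$. For $i<j$ the face map neither touches nor deletes either special slot, and a direct check yields $(d_i^0-d_i^1)s_j=s_{j-1}(d_i^0-d_i^1)$; for $i>j+1$, using $z^{\tr k}\tr c=(z\tr c)^{\tr k}$ to commute the acting variable past the pair, one gets $(d_i^0-d_i^1)s_j=s_j(d_{i-1}^0-d_{i-1}^1)$. Both families of terms therefore lie in the $N$-degenerate subgroup. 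The only remaining terms are $i=j$ and $i=j+1$, whose signs are opposite. Summing over $k$, the two ``delete'' faces $d_j^0$ and $d_{j+1}^0$ produce $\sum_k(\dots,y_j^{\tr(k+1)},\dots)$ and $\sum_k(\dots,y_j^{\tr k},\dots)$, which are equal by the cyclic reindexing $k\mapsto k+1$; and the two ``act-and-delete'' faces $d_j^1,d_{j+1}^1$ produce, using constancy of left multiplication along the kink orbit and the identity $y_j^{\tr k}\tr y_j^{\tr(k+1)}=y_j^{\tr(k+1)}$, one and the same chain $\sum_k(y_1\tr y_j,\dots,y_{j-1}\tr y_j,y_j^{\tr(k+1)},y_{j+1},\dots,y_{n-1})$. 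Hence the $i=j$ and $i=j+1$ contributions cancel, and $\partial_n\bigl(s_j(\mathbf y)\bigr)$ is a sum of terms in the images of $s_{j-1}$ and $s_j$, i.e.\ is $N$-degenerate. (The boundary cases $j=1$, $j=n-1$, and small $n$ are covered by the same formulas, with the $i=j$ sub-case simply absent when $j=1$ and the lone $i=j+1$ term then vanishing by the same reindexing.)

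The main obstacle is precisely this last cancellation. Unlike the quandle case, the two faces that interact with the degenerate pair $(y_j^{\tr k},y_j^{\tr(k+1)})$ -- the one that acts on everything to its left and the one that merges the pair -- do \emph{not} cancel termwise in $k$; they cancel only after (a) replacing left multiplication by $y_j^{\tr k}$ with left multiplication by $y_j$, which is the content of the identity $z\tr y^{\tr k}=z\tr y$, and (b) exploiting that the two surviving chains differ only by a unit shift of the kink exponent, which the sum $\sum_{k=1}^N$ absorbs because $\pi^N=\mathrm{id}$. Getting these two mechanisms to line up -- in particular checking that the ``act-and-delete'' face really collapses the merged entry $y_j^{\tr k}\tr y_j^{\tr(k+1)}$ to $y_j^{\tr(k+1)}$ -- is the one place where the proof uses something genuinely beyond the simplicial-identity formalism, and it is where I would spend most of the care.
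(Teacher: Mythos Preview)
Your proof is correct and follows essentially the same route as the paper's: split the boundary of an $N$-degenerate generator according to whether the face index lies below, at, or above the special pair, show the two middle faces cancel via $z\tr y^{\tr k}=z\tr y$, $y^{\tr k}\tr y^{\tr(k+1)}=y^{\tr(k+1)}$, and the reindexing $k\mapsto k+1$, and show the outer faces are again $N$-degenerate using $z^{\tr k}\tr c=(z\tr c)^{\tr k}$. Your simplicial $s_j/d_i^{\epsilon}$ packaging and your in-line derivations of the two rack identities (the paper cites the first from \cite{N1}) make the write-up somewhat tidier and the boundary case $j=1$ explicit, but the underlying argument is the same.
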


\begin{proof} Let 
\[\mathbf{x}=\sum_{k=1}^N 
(x_1,\dots, x_{i-1},x_i^{\tr k}, x_i^{\tr (k+1)}, x_{i+2}, \dots, x_n).\]
One checks that the $(n-1)$-chain $\partial_n\mathbf{x}$ is $N$-degenerate:

\begin{eqnarray}\label{EQ2}
\partial_n \mathbf{x} & = & 
\partial_{n}\left[
\sum_{k=1}^N 
(x_1,\dots, x_{i-1},x_i^{\tr k}, x_i^{\tr (k+1)}, x_{i+2}, \dots, x_n)\right] 
\nonumber \\
& = &\sum_{k=1}^N  \partial_{n}
(x_1,\dots, x_{i-1},x_i^{\tr k}, x_i^{\tr (k+1)}, x_{i+2}, \dots, x_n) \nonumber \\& = &  
\sum_{k=1}^N \left\{ \sum_{j=2}^{i-1} (-1)^{j} [ (x_1,\dots,x_{j-1}, 
x_{j+1},\dots, x_i^{\tr k}, x_i^{\tr (k+1)}, x_{i+2}, \dots, x_n) \right.
\nonumber \\ 
& & \quad \quad
\left. - (x_1 \tr x_j,\dots, x_{j-1} \tr x_j, x_{j+1},\dots, x_i^{\tr k}, 
x_i^{\tr (k+1)}, x_{i+2}, \dots, x_n) ] \right\} \nonumber \\
& & +
\sum_{k=1}^N 
\left\{ 
(-1)^i\left[ (x_1,\dots, x_{i-1}, x_i^{\tr (k+1)}, x_{i+2}, \dots, x_n) 
\right. \right. \nonumber \\ & &
\quad \quad - \left. \left.
(x_1 \tr x_i^{\tr k} ,\dots, x_{i-1} \tr x_i^{\tr k}, x_i^{\tr (k+1)}, x_{i+2}, \dots, x_n)\right]  \right.
\nonumber \\ 
&  & \quad\quad +
(-1)^{i+1}\left[ (x_1,\dots, x_{i-1}, x_i^{\tr k}, x_{i+2}, \dots, x_n)
\right. \nonumber \\
& & \quad\quad - 
\left. \left.(x_1 \tr x_i^{\tr (k+1)} ,\dots, x_{i-1} \tr x_i^{\tr (k+1)}, x_{i}^{\tr k} \tr x_i^{\tr (k+1)} ,x_{i+2}, \dots, x_n)\right]  \right\}
\nonumber \\ 
& & +
\sum_{k=1}^N \left\{\sum_{j=i+2}^{n} (-1)^{j} 
\left[ (x_1,\dots, x_{i-1},x_i^{\tr k}, x_i^{\tr (k+1)}, \dots, x_{j-1}, x_{j+1},\dots, x_n)  \right. \right.
\nonumber \\
&  & \quad \quad \left.\left.
 - (x_1 \tr x_j,\dots, x_{i-1} \tr x_j,x_i^{\tr k} \tr x_j, x_i^{\tr (k+1)} \tr x_j, \dots, x_{j-1} \tr x_j,x_{j+1}, \dots, x_n) \right]
\right\}.
\end{eqnarray}
One observes that the following sum vanishes:
\[
\sum_{k=1}^N \left\{ \left[ (x_1,\dots, x_{i-1}, x_i^{\tr (k+1)}, x_{i+2}, \dots, x_n) - (x_1 \tr x_i^{\tr k} ,\dots, x_{i-1} \tr x_i^{\tr k}, x_i^{\tr (k+1)}, x_{i+2}, \dots, x_n)\right] -\right.\]
\begin{equation}
\left.\left[ (x_1,\dots, x_{i-1}, x_i^{\tr k}, x_{i+2}, \dots, x_n) - (x_1 \tr x_i^{\tr (k+1)} ,\dots, x_{i-1} \tr x_i^{\tr (k+1)}, x_i^{\tr k}\tr x_i^{\tr (k+1)},x_{i+2}, \dots, x_n)\right] \right\} \\ 
\end{equation}
because $x^{\tr N}=x$ and thus
\[ \sum_{k=1}^N \left[ (x_1,\dots, x_{i-1}, x_i^{\tr (k+1)}, x_{i+2}, \dots, x_n)- (x_1,\dots, x_{i-1}, x_i^{\tr k}, x_{i+2}, \dots, x_n)\right]=0
\]
and
\[
(x_1 \tr x_i^{\tr k} ,\dots, x_{i-1} \tr x_i^{\tr k}, x_i^{\tr (k+1)}, x_{i+2}, \dots, x_n) =  (x_1 \tr x_i^{\tr (k+1)} ,\dots, x_{i-1} \tr x_i^{\tr (k+1)}, x_i^{\tr k}\tr x_i^{\tr (k+1)},x_{i+2}, \dots, x_n).
\]
This last difference is zero since by [15, Lemma 1] we have 
$\forall u \in X,\; u \tr x_i^{\tr (k+1)}= u \tr (x_i^{\tr k} \tr x_i^{\tr k})= u \tr x_i^{\tr k}$ and similarly $x_i^{\tr k} \tr x_i^{\tr (k+1)}= x_i^{\tr k} \tr x_i^{\tr k}$.

Using self-distributivity, the last term in  equation (\ref{EQ2}) can be re-written as (so it makes the last sum fits in the definition of degenerate chains)
\[
(x_1 \tr x_j,\dots, x_{i-1} \tr x_j,x_i^{\tr k} \tr x_j,( x_i^{\tr k} \tr x_j)\tr (x_i^{\tr k} \tr x_j ), \dots, x_{j-1} \tr x_j,x_{j+1}, \dots, x_n) 
\]
\end{proof}

Recall from \cite{CJKLS} that $C_\ast^{\rm R}(X)
= \{C_n^{\rm R}(X), \partial_n \}$ is a chain complex.
Let $C_n^{\rm ND}(X)$ be the subset of $C_n^{\rm R}(X)$ generated
by the $N$-degenerate $n$-chains as in Definition \ref{Ndegenerate}
if $n \geq 2$;
otherwise let $C_n^{\rm ND}(X)=0$. If $X$ is a rack, then
$\partial_n(C_n^{\rm ND}(X)) \subset C_{n-1}^{\rm ND}(X)$ and
$C_\ast^{\rm ND}(X) = \{ C_n^{\rm ND}(X), \partial_n \}$ is a sub-complex of
$C_\ast^{\rm
R}(X)$. Put $C_n^{\rm R/ND}(X) = C_n^{\rm R}(X)/ C_n^{\rm ND}(X)$ and 
$C_\ast^{\rm R/ND}(X) = \{ C_n^{\rm R/ND}(X), \partial'_n \}$,
where $\partial'_n$ is the induced homomorphism.
Henceforth, all boundary maps will be denoted by $\partial_n$.

For an abelian group $G$, define the chain and cochain complexes
\begin{eqnarray}
C_\ast^{\rm W}(X;G) = C_\ast^{\rm W}(X) \otimes G, \quad && \partial =
\partial \otimes {\rm id}; \\ C^\ast_{\rm W}(X;G) = {\rm Hom}(C_\ast^{\rm
W}(X), G), \quad
&& \delta= {\rm Hom}(\partial, {\rm id})
\end{eqnarray}
in the usual way, where ${\rm W}$ 
 $={\rm ND}$, ${\rm R}$, ${\rm R/ND}$.

\begin{definition}\textup{
The $n$\/th {\it reduced rack homology group\/}  and the $n$\/th
{\it reduced rack cohomology group\/ }  of a rack $X$ with coefficient group 
$G$ are}
\begin{eqnarray}
H_n^{R/ND}(X; G) 
 = H_{n}(C_\ast^{R/ND}(X;G)), \quad
H^n_{R/ND}(X; G) 
 = H^{n}(C^\ast_{R/ND}(X;G)). \end{eqnarray}
\end{definition}

As an example, let us now compute the first and second homology groups of the $(t,s)$-rack 
$X=\mathbb{Z}_4$ with $t=1$ and $s=2$ as in example \ref{example3}.  We have the following

\begin{lemma}  Let $X=\mathbb{Z}_4$ with $\mathbf{u}\tr\mathbf{v}=\mathbf{u}+2\mathbf{v}$, 
the rack in example \ref{example3}.  The group of $2$-degenerate 2-chains $C^{ND}_2(X)$ is generated by
\[(3,1)+(1,3), \quad 2(2,2), \quad \mathrm{and} \quad 2(4,4).\]
\end{lemma}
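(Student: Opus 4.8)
The plan is to compute explicitly the rack power map $\pi$ on $X=\mathbb{Z}_4$, identify all the generating $N$-degenerate $2$-chains from Definition \ref{Ndegenerate} (with $N=2$), and then reduce the resulting list to a minimal generating set. Since $N=2$, a generating $2$-degenerate chain has the form $\sum_{k=1}^{2}(x^{\tr k}, x^{\tr(k+1)}) = (x^{\tr 1}, x^{\tr 2}) + (x^{\tr 2}, x^{\tr 3})$ for $x\in X$ (there are no ambient coordinates $x_1,\dots,x_{i-1}$ or $x_{i+2},\dots,x_n$ to carry along when $n=2$). So the first step is to record the kink map $\pi(x)=x\tr x = x+2x = 3x \pmod 4$: thus $\pi(1)=3$, $\pi(2)=2$, $\pi(3)=1$, $\pi(4)=4$, confirming $\pi=(13)$ as in Example \ref{example3} and $\pi^2=\mathrm{id}$, so indeed $x^{\tr 2}=\pi^2(x)=x$ for all $x$.

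Next I would plug each $x\in\{1,2,3,4\}$ into the generator formula. For $x=1$: $x^{\tr 1}=\pi(1)=3$, $x^{\tr 2}=\pi^2(1)=1$, $x^{\tr 3}=\pi^3(1)=\pi(1)=3$, giving $(3,1)+(1,3)$. For $x=3$: $x^{\tr 1}=1$, $x^{\tr 2}=3$, $x^{\tr 3}=1$, giving $(1,3)+(3,1)$ — the same chain. For $x=2$: $x^{\tr 1}=2$, $x^{\tr 2}=2$, $x^{\tr 3}=2$, giving $(2,2)+(2,2)=2(2,2)$. For $x=4$: similarly $(4,4)+(4,4)=2(4,4)$. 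So the three generators $(3,1)+(1,3)$, $2(2,2)$, $2(4,4)$ exhaust the generators coming directly from the definition, and there is nothing further to reduce: the $\mathbb{Z}$-submodule $C^{ND}_2(X)$ is exactly the span of these three elements.

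The one point that deserves care — and the only place the argument is not purely mechanical — is making sure I have correctly read the definition of $N$-degenerate for $n=2$: that the chain truly is the single-index sum $\sum_{k=1}^N(x_i^{\tr k}, x_i^{\tr(k+1)})$ with no surrounding coordinates, so that the full set of generators is indexed only by the choice of base element $x\in X$ (four choices, collapsing to three distinct chains). Once that is pinned down, the computation of $\pi$ on $\mathbb{Z}_4$ and the substitution are routine, and no cancellation or relation among $(3,1)+(1,3)$, $2(2,2)$, $2(4,4)$ occurs since these involve disjoint basis elements of $C^R_2(X)$. I would therefore present the argument as: (1) compute $\pi$; (2) list the four generator chains; (3) observe two coincide, yielding the stated three; (4) note they are $\mathbb{Z}$-independent so they generate $C^{ND}_2(X)$ with no redundancy.
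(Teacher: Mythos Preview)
Your proposal is correct. The paper states this lemma without proof, and your direct computation from Definition~\ref{Ndegenerate}---computing $\pi$, listing the four generator chains indexed by $x\in X$, and observing that two coincide---is exactly the intended routine verification.
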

Now since $\mathbf{u}\tr\mathbf{v}=\mathbf{u}+2\mathbf{v}$ and $\partial_2(i,j)=(i)-(i \tr j)$, the image group, $\mathrm{Im}(\partial_2)$, is two-dimensional, 
generated by two generators $(1)-(3)$ and $(2)-(4)$.  This implies the following 
\begin{lemma}
The groups $H^{R}_1(X)$ and $H^{R/ND}_1(X)$ are two-dimensional.
\end{lemma}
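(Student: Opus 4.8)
The plan is to compute all the relevant groups explicitly for the four-element rack $X=\mathbb{Z}_4$ and deduce the dimensions. First I would recall the structure of low-dimensional rack chains: $C_1^R(X)$ is free abelian of rank $4$ on the generators $(1),(2),(3),(4)$, while $C_2^R(X)$ is free abelian of rank $16$ on the pairs $(i,j)$; since $\partial_1=0$, we have $H_1^R(X)=C_1^R(X)/\mathrm{Im}(\partial_2)$. So I need to pin down $\mathrm{Im}(\partial_2)$.

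Next I would use the formula $\partial_2(i,j)=(i)-(i\tr j)$, which for this rack becomes $(i)-(i+2)$ with the identification of $\mathbb{Z}_4$ with $\{1,2,3,4\}$ representing residues $\{1,2,3,0\}$. Running over all $j$, the only nonzero values are $(1)-(3)$ and $(2)-(4)$ (and their negatives), since adding $2$ is an involution partitioning $X$ into the two pairs $\{1,3\}$ and $\{2,4\}$. Hence $\mathrm{Im}(\partial_2)$ is the rank-$2$ subgroup of the rank-$4$ free abelian group $C_1^R(X)$ spanned by these two elements, and this is already primitive (each generator is a difference of distinct basis vectors, so the quotient is torsion-free), giving $H_1^R(X)\cong\mathbb{Z}^2$, i.e.\ two-dimensional. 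This establishes the claim for $H_1^R(X)$.

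For $H_1^{R/ND}(X)$ I would invoke the preceding lemma, which identifies $C_2^{ND}(X)$ as generated by $(3,1)+(1,3)$, $2(2,2)$, and $2(4,4)$. The key observation is that $\partial_2$ kills all three of these generators: $\partial_2\big((3,1)+(1,3)\big)=\big((3)-(1)\big)+\big((1)-(3)\big)=0$, and $\partial_2\big(2(2,2)\big)=2\big((2)-(4)\big)\cdot$ — wait, more precisely $\partial_2(2,2)=(2)-(2\tr 2)=(2)-(4)$, so $\partial_2(2(2,2))=2(2)-2(4)$, which is \emph{not} zero; but this element already lies in the subgroup $\mathrm{Im}(\partial_2)$ generated by $(2)-(4)$. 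The correct structural point is therefore that $\partial_2\big(C_2^{ND}(X)\big)\subseteq C_1^{ND}(X)=0$ — no, that fails too. So the accurate route is: $C_\ast^{R/ND}$ has $C_1^{R/ND}(X)=C_1^R(X)$ (since $C_1^{ND}(X)=0$) and $C_2^{R/ND}(X)=C_2^R(X)/C_2^{ND}(X)$, and $H_1^{R/ND}(X)=C_1^R(X)/\partial_2\big(C_2^{R/ND}(X)\big)=C_1^R(X)/\overline{\mathrm{Im}(\partial_2)}$ where the bar denotes the image of the induced map. Since $\partial_2$ factors through the quotient, $\partial_2\big(C_2^{R/ND}(X)\big)=\mathrm{Im}(\partial_2)$ as a subgroup of $C_1^R(X)$, exactly the same rank-$2$ subgroup as before. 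Hence $H_1^{R/ND}(X)\cong H_1^R(X)\cong\mathbb{Z}^2$.

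The main obstacle — really a bookkeeping point rather than a deep difficulty — is making sure the passage to the quotient complex $C_\ast^{R/ND}$ does not shrink the image of $\partial_2$ in degree $1$: one must check that every element of $\mathrm{Im}(\partial_2\colon C_2^R\to C_1^R)$ is still hit after quotienting by $C_2^{ND}(X)$, equivalently that no generator $(1)-(3)$ or $(2)-(4)$ of the image is realized \emph{only} by chains lying in $C_2^{ND}(X)$. This is immediate here since, e.g., $\partial_2(1,2)=(1)-(3)$ with $(1,2)\notin C_2^{ND}(X)$, and $\partial_2(2,1)=(2)-(4)$ with $(2,1)\notin C_2^{ND}(X)$. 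With that verified, both homology groups are free abelian of rank $2$, completing the proof.
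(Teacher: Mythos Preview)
Your approach is exactly the paper's: compute $\mathrm{Im}(\partial_2)$ inside the rank-$4$ group $C_1^R(X)$, see it is the rank-$2$ primitive subgroup generated by $(1)-(3)$ and $(2)-(4)$, and conclude. The paper gives no more detail than that single sentence.

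Two small corrections. First, the boundary formula here is $\partial_2(i,j)=(i)-(i+2j)$, not $(i)-(i+2)$; the value depends on the parity of $j$. In particular $2\tr 2=2+4\equiv 2$ and $1\tr 2=1+4\equiv 1$, so $\partial_2(2,2)=0$ and $\partial_2(1,2)=0$. With that fixed, $\partial_2$ genuinely vanishes on all three generators $(3,1)+(1,3)$, $2(2,2)$, $2(4,4)$ of $C_2^{ND}(X)$, as it must by the subcomplex theorem; your apparent counterexample $\partial_2(2(2,2))=2(2)-2(4)$ was just this arithmetic slip. (Your final example $\partial_2(1,2)=(1)-(3)$ is likewise wrong; use $\partial_2(1,1)=(1)-(3)$ instead.)

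Second, the ``main obstacle'' you flag for $H_1^{R/ND}$ does not exist. Since $C_1^{ND}(X)=0$, the induced map $\partial_2':C_2^R/C_2^{ND}\to C_1^R$ is just $\partial_2$ precomposed with the surjection $C_2^R\twoheadrightarrow C_2^R/C_2^{ND}$, and precomposing with a surjection never shrinks the image. So $\mathrm{Im}(\partial_2')=\mathrm{Im}(\partial_2)$ automatically, and $H_1^{R/ND}(X)=H_1^R(X)$ with no further check required.
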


\begin{lemma}
  The group $\mathrm{Ker}(\partial _2)$ is 
14-dimensional.
\end{lemma}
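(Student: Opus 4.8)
The plan is to obtain the claim from the rank--nullity theorem, using the description of $\mathrm{Im}(\partial_2)$ established in the preceding lemma. First I would note that $C_2^R(X)$ is the free abelian group on the $16$ pairs $(i,j)$ with $i,j\in\{1,2,3,4\}$, so $\mathrm{rank}\,C_2^R(X)=16$. Next I would compute $\partial_2$ on generators: since $\partial_2(i,j)=(i)-(i\tr j)$ and $i\tr j=i+2j$ in $\mathbb{Z}_4$, and since $2j\equiv 0\pmod 4$ for $j$ even while $2j\equiv 2\pmod 4$ for $j$ odd, we get $\partial_2(i,j)=0$ for the eight generators with $j\in\{2,4\}$, and $\partial_2(i,j)=(i)-(i+2)$ for the eight generators with $j\in\{1,3\}$.

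From this the eight values $\partial_2(i,j)$ with $j$ odd are just $(1)-(3)$, $(2)-(4)$, $(3)-(1)$, $(4)-(2)$, each occurring twice, so $\mathrm{Im}(\partial_2)$ is spanned by the two independent chains $(1)-(3)$ and $(2)-(4)$; this is precisely the content of the preceding lemma, that $\mathrm{Im}(\partial_2)$ is two-dimensional. Since $\ker(\partial_2)$ is a subgroup of the free abelian group $C_2^R(X)$, it is itself free, and
\[
\mathrm{rank}\,\ker(\partial_2)=\mathrm{rank}\,C_2^R(X)-\mathrm{rank}\,\mathrm{Im}(\partial_2)=16-2=14,
\]
which is the assertion.

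If an explicit basis is wanted instead, I would produce one directly: take the eight generators $(i,2)$ and $(i,4)$ for $i\in\{1,2,3,4\}$, which lie in $\ker(\partial_2)$, together with the six chains $(1,1)-(1,3)$, $(3,1)-(3,3)$, $(1,1)+(3,1)$, $(2,1)-(2,3)$, $(4,1)-(4,3)$, $(2,1)+(4,1)$; each of these six also has vanishing boundary, and on the eight-dimensional ``$j$ odd'' block $\partial_2$ imposes exactly the two independent relations $c_{11}+c_{13}=c_{31}+c_{33}$ and $c_{21}+c_{23}=c_{41}+c_{43}$, so these $8+6=14$ chains are independent and span $\ker(\partial_2)$. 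The computation presents no real obstacle; the only point needing care is the bookkeeping of which of the sixteen generators $\partial_2$ annihilates and the verification that the surviving images collapse onto a rank-two subgroup, both of which are immediate once the identity $i\tr j=i+2j$ is in hand.
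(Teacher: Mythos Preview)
Your argument is correct. The rank--nullity computation is sound: $C_2^R(X)$ is free of rank $16$, the image of $\partial_2$ has rank $2$ (as established just before this lemma), and hence $\ker(\partial_2)$ is free of rank $14$. Your alternative explicit basis also checks out.

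The paper takes a more direct route: it simply exhibits fourteen explicit generators of $\ker(\partial_2)$, namely $(i,2)$, $(i,4)$, $(i,1)+(i+2,3)$ for $i=1,2,3,4$, and $(i,3)+(i+2,3)$ for $i=1,2$, as the output of a straightforward computation. Your rank--nullity argument is more conceptual and avoids having to verify independence and spanning of a specific list; it leverages the two-dimensionality of $\mathrm{Im}(\partial_2)$ already recorded in the preceding discussion, so no new work is really needed. The paper's approach, by contrast, gives concrete cycles one can use downstream. Your explicit basis in the second paragraph differs from the paper's but is equally valid.
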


\begin{proof}
A straightforward computation gives the following fourteen generators  $(i,2)$, $(i,4)$, $(i,1)+(i+2,3)$ for $i=1,2,3,4$ and $(i,3)+(i+2,3)$ for $i=1,2,$ of the group $\mathrm{Ker}(\partial _2)$.
\end{proof}

Now we calculate the dimension of the second homology group.

\begin{lemma}
  The group $\mathrm{Im}(\partial _3)$ is 
11-dimensional, giving dim $H_2(X)=3$.
\end{lemma}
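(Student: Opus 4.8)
The plan is to compute $\dim\operatorname{Im}(\partial_3)$ directly from the formula for $\partial_3$ by listing a generating set of $C_3^R(X)$, applying $\partial_3$ to each generator, and extracting the rank of the resulting collection of $2$-chains via row reduction over $\mathbb{Q}$ (or $\mathbb{Z}$ with attention to torsion). Recall that $C_3^R(X)$ is free of rank $|X|^3 = 64$, generated by the triples $(i,j,k)$ with $i,j,k\in\{1,2,3,4\}$, and that for a triple
\[
\partial_3(i,j,k) = -\bigl[(i,k)-(i\tr j,\,j\tr k)\bigr]\ \text{(wait: careful)}\ ,
\]
so I would first write $\partial_3(x_1,x_2,x_3) = -\bigl[(x_1,x_3)-(x_1\tr x_2, x_2\tr x_3)\bigr] + \bigl[(x_1,x_2)-(x_1\tr x_3, x_2\tr x_3)\bigr]$, substitute $a\tr b = a+2b \bmod 4$, and tabulate the $64$ image vectors inside the free group on the $16$ symbols $(p,q)$.

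First I would organize the computation by noting that $X = \mathbb{Z}_4$ with $u\tr v = u+2v$ depends on $v$ only through $v \bmod 2$, so $i\tr j = i\tr j'$ whenever $j\equiv j'\pmod 2$; this collapses many of the $64$ generators' images and should make the bookkeeping manageable by hand. Next I would compute $\operatorname{rank}(\partial_3)$; the claim is that it equals $11$. Combined with the previous lemma's $\dim\operatorname{Ker}(\partial_2) = 14$, this gives $\dim H_2^R(X) = 14 - 11 = 3$, which is the assertion. I would present the eleven independent image generators explicitly (e.g. a maximal linearly independent subset of the $\partial_3(i,j,k)$) so the reader can verify the rank, mirroring the style of the preceding lemmas which list explicit generators.

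The main obstacle I anticipate is purely computational bookkeeping: verifying that the $64$ vectors in a $16$-dimensional space span exactly an $11$-dimensional subspace, and in particular confirming the upper bound $\operatorname{rank}(\partial_3)\le 11$ (equivalently $\dim\operatorname{Ker}(\partial_3) = 64-11 = 53$), which is the part most prone to arithmetic slips. A secondary subtlety is that the homology here is over $\mathbb{Z}$, so I would double-check there is no torsion mismatch: since the preceding lemmas phrase everything in terms of dimensions (ranks), I would confirm that $\operatorname{Im}(\partial_3)$ is a direct summand of $\operatorname{Ker}(\partial_2)$ of the stated corank, so that $H_2^R(X)\cong\mathbb{Z}^3$ with no finite part, or else record the torsion explicitly. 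To keep the exposition honest without grinding, I would relegate the full $64$-row reduction to a remark or an appendix table and in the proof body simply exhibit the eleven spanning elements together with a short argument (using the parity collapse above) that no twelfth independent element arises.
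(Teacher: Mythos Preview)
Your approach is essentially the paper's: it too simply declares the result of a direct computation and exhibits eleven explicit generators of $\operatorname{Im}(\partial_3)$, namely $(i,3)-(i,1)$ and $(i,2)-(i+2,4)$ for $i=1,2,3,4$ together with $(i,2)-(i+2,2)$ for $i=1,2,3$, and then subtracts $14-11=3$.

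One concrete point to fix before you carry this out: the formula you wrote for $\partial_3$ is incorrect. In the $i=2$ summand of the rack differential only the entries \emph{preceding} $x_i$ are acted on by $x_i$, so the term should be $(x_1\tr x_2,\,x_3)$, not $(x_1\tr x_2,\,x_2\tr x_3)$. With the paper's convention,
\[
\partial_3(x_1,x_2,x_3)=(x_1,x_3)-(x_1\tr x_2,\,x_3)-(x_1,x_2)+(x_1\tr x_3,\,x_2\tr x_3).
\]
Your overall sign is also flipped, though that is immaterial for the image. Your parenthetical ``wait: careful'' was warranted; with the corrected formula your parity-collapse observation and the rest of the plan go through, and you will recover exactly the eleven generators listed above. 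The paper does not address the torsion question you raise; it simply speaks of ``dimension'' throughout.
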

\begin{proof}
Again a straightforward computation gives the following eleven generators $$(i,3)-(i,1), \; (i,2)-(i+2,4), \;\;\text{for}\; i=1,2,3,4 \;\;\text{and} \;(i,2)-(i+2,2) \;\;\text{for} \;i=1,2, 3.$$
\end{proof}

\section{\large\textbf{Enhancing the rack counting invariant}}

In this section we use cocycles in $H_{R/ND}^2(X;G)$ to define an 
enhancement of the rack counting invariant.

Let $L$ be a link diagram with $c$ components, $X$ a finite rack with rack rank $N$, 
$W=(\mathbb{Z}_N)^c$ and $\phi\in H_{R/ND}^2(X;G)$ a 
nondegenerate 2-cocycle with coefficients in an abelian group $G$. For a 
given coloring $f\in\mathrm{Hom}(FR(L),X)$, we define the \textit{Boltzmann 
weight} of $f$ in the following way. At each positive crossing in $L$, we 
have a contribution of $\phi(x,y)$ where $x$ is the color on the inbound 
underarc and $y$ is the color on the overarc; at each negative crossing, we
have a contribution of $-\phi(x,y)$ where $x$ is the color on the outbound
underarc and $y$ is the color on the overarc. The Boltzmann weight $BW(f)$
is then the sum over all crossings in $L$ of these contributions.
\[+\phi(x,y) \quad 
\raisebox{-0.5in}{\includegraphics{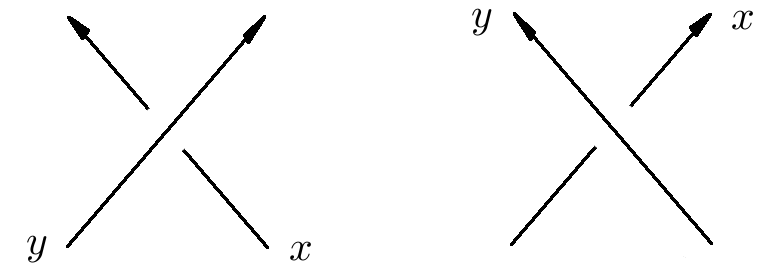}} \quad -\phi(x,y)\]
\noindent
The rack cocycle condition is precisely the condition required so that the
Boltzmann weight of a rack-colored diagram is unchanged by Reidemeister III
moves; the coloring condition is chosen to guarantee invariance under 
Reidemeister II and blackboard-framed type I moves.

\[\begin{array}{r}
+\phi(y,z)\\ 
 \\
+\phi(x,z) \\
 \\
+\phi(x\tr z,y\tr z)\\
\end{array}\raisebox{-0.8in}{
\includegraphics{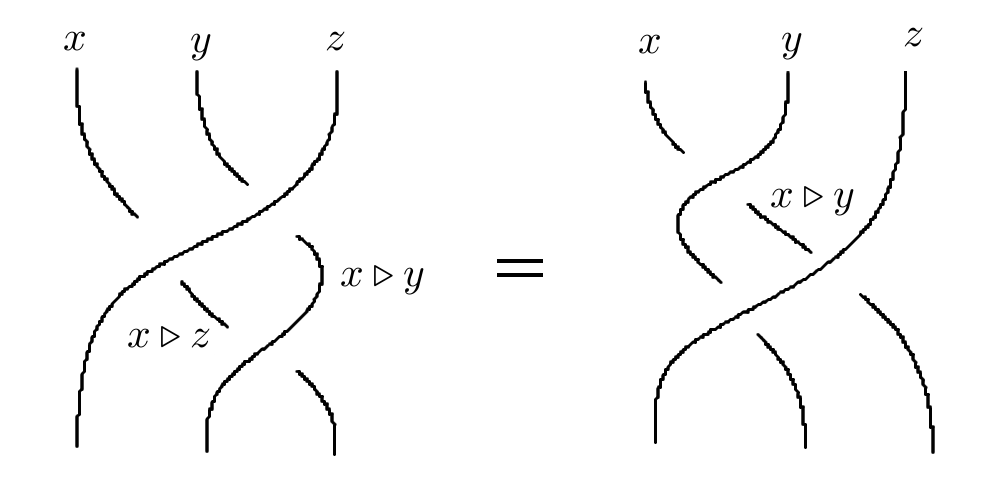}}
\begin{array}{l}
+\phi(x,y) \\
\\
+\phi(x\tr y, z) \\
\\
+\phi(y,z) \\
\end{array}
\]

\[\includegraphics{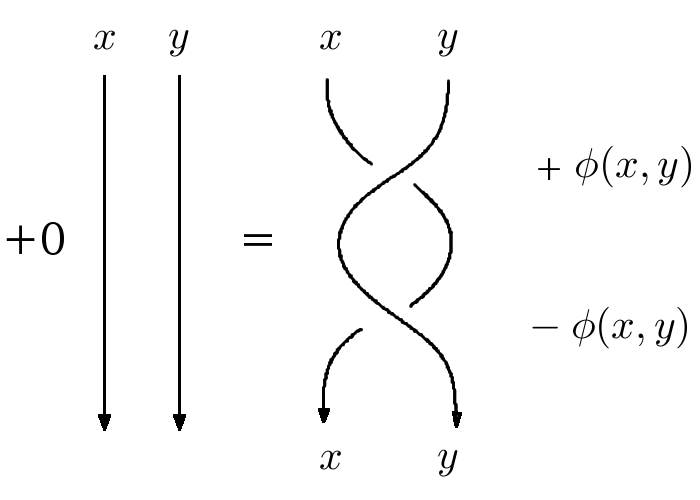} \quad \quad
\includegraphics{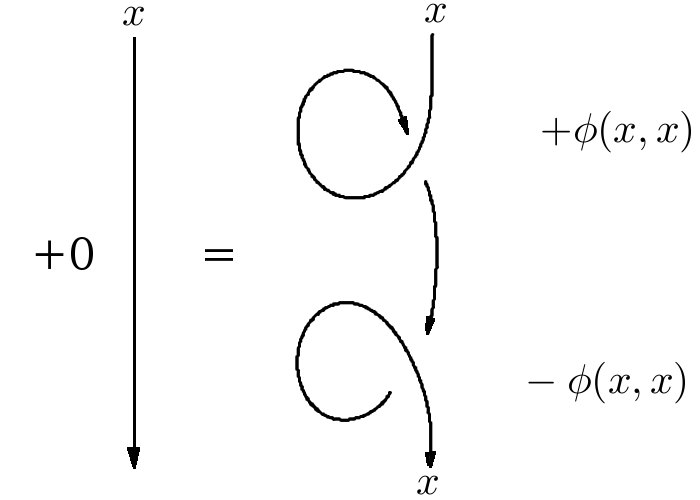}\]
\noindent
The $N$-degenerate subcomplex is generated by the characteristic chains of
$N$-phone cord tangles. By setting these equal to zero, i.e. selecting
$\phi\in H^{2}_{R/ND}(X;G)$ as opposed to $\phi\in H^{2}_{R}(X;G)$, we obtain
invariance under the $N$-phone cord move. Thus, we have

\begin{theorem}
For any cocycle $\phi\in H^2_{R/ND}(X;G)$, the multiset
\[\Phi^{M}_{\phi}(L)=\{BW(f) \ | \ f\in \mathrm{Hom}(FR(L,\mathbf{w}),X), \ 
\mathbf{w}\in W\}\]
is an invariant of ambient isotopy.
\end{theorem}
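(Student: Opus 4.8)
The plan is to follow the standard state-sum argument, showing that the multiset $\Phi^M_\phi(L)$ is unchanged under each of the moves generating blackboard-framed isotopy together with the $N$-phone cord move, since (by the discussion preceding Theorem 1) two diagrams representing the same ambient isotopy class of links are related by a finite sequence of such moves. First I would fix a diagram $D$ of $L$ and a framing vector $\mathbf{w}\in W$, and recall that $\mathrm{Hom}(FR(L,\mathbf{w}),X)$ is exactly the set of rack colorings of $D$; since $BW$ is defined crossing-by-crossing as a sum of $\pm\phi$-terms, it is a well-defined element of $G$ for each coloring. The key point is that each move induces a bijection between the coloring sets of the two diagrams, and under this bijection the Boltzmann weight is preserved, so the multisets of weights coincide.

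The heart of the argument is the local analysis at each move. For a Reidemeister III move, the coloring correspondence is the obvious one (colors on the three incoming arcs determine colors on the three outgoing arcs in both pictures, using self-distributivity), and the difference of Boltzmann weights between the two sides is precisely the $2$-cocycle relation $\delta\phi=0$ evaluated on the relevant triple — I would read off the three contributions on each side from the figure already displayed in the excerpt, namely $\phi(y,z)+\phi(x,z)+\phi(x\tr z,y\tr z)$ versus $\phi(x,y)+\phi(x\tr y,z)+\phi(y,z)$, and note these are equal exactly because $\phi$ is a cocycle. For a Reidemeister II move the two new crossings have opposite sign and equal $\phi$-argument pairs (by the coloring rule at positive versus negative crossings, which is arranged precisely for this), so their contributions cancel and $BW$ is unchanged. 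For the blackboard-framed type I (curl) moves, the coloring condition again forces the relevant arc colors to match up so that no net contribution is added; and for the $N$-phone cord move, the added crossings contribute a term of the form $\sum_{k=1}^N \phi(x_i^{\tr k}, x_i^{\tr(k+1)})$-type expressions, i.e. the evaluation of $\phi$ on an $N$-degenerate chain in the sense of Definition~\ref{Ndegenerate}, which vanishes precisely because $\phi\in H^2_{R/ND}(X;G)$ rather than merely $H^2_R(X;G)$ — this is where Theorem~2 (that the $N$-degenerate chains form a subcomplex) is used, since it guarantees $H^2_{R/ND}$ is well defined and that the pairing of $\phi$ with such chains is well defined on homology.

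Finally I would observe that a Reidemeister III, II, or curl move is supported in a disk and leaves all arc colors outside that disk unchanged, so the bijection on colorings is the identity on the ``boundary data'' and is compatible with composing moves; hence iterating over a sequence of moves relating two diagrams of $L$ shows the multisets agree. One subtlety worth spelling out is that the invariant ranges over \emph{all} framings $\mathbf{w}\in W$, so when comparing two diagrams of the same unframed link we must allow framing changes; these are realized by (blackboard) type I moves, under which, by the analysis above, $BW$ is invariant, while the $N$-phone cord move identifies framings differing by $N$ on a component, matching the index set $W=(\mathbb{Z}_N)^c$. I expect the main obstacle to be purely bookkeeping: carefully checking the sign conventions at negative crossings and the orientations in the Reidemeister II and curl cases so that the cancellations are exact, together with confirming that the $N$-phone cord contribution is literally an $N$-degenerate chain as defined, rather than something only equal to one up to a boundary — the latter being harmless for the cohomology pairing but requiring the subcomplex property to state cleanly.
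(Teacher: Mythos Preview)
Your proposal is correct and follows essentially the same approach as the paper: the paper's ``proof'' is precisely the paragraph preceding the theorem, which argues (with accompanying figures) that the cocycle condition yields Reidemeister III invariance, the coloring/sign conventions yield Reidemeister II and blackboard-framed I invariance, and vanishing on $N$-degenerate chains yields $N$-phone cord invariance. You have simply spelled out in words the same local checks the paper indicates pictorially, together with the bookkeeping about framings summing over $W$; there is no substantive difference in method.
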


We can also use generating function style notation to rewrite the multiset
as a one-variable polynomial with $\mathbb{Z}$ coefficients and $G$ exponents:
\[\Phi_{\phi}(L)=\sum_{\mathbf{w}\in W} 
\left(\sum_{f\in \mathrm{Hom}(FR(L,\mathbf{w}),X)}u^{BW(f)}\right).\]

\begin{remark}
\textup{The invariant $\Phi^M_{\phi}(L)$ is the specialiation of the invariant
$\Phi_{\phi}(L,T)$ defined in \cite{N1} (Definition 8) obtained by setting 
$q_k=1$ for $k=1,\dots,c$.}
\end{remark}

\begin{remark}\textup{
If $N=1$ so that $X$ is a quandle, then $\Phi_{\phi}$ is the CJKLS invariant
defined in \cite{CJKLS}.
}\end{remark}

Note that there are four oriented $N$-phone cord moves; we have used only
one to define our degenerate subcomplex. One of the other moves yields the
same degenerate subcomplex, but the other two yield a slightly different
subcomplex, generated by chains of the form
\[\sum_{k=1}^N 
(x_1,\dots, x_{i-1},x_i^{\tr k}, x_i^{\tr k}, x_{i+2}, \dots, x_n).\] 
Note that these chains are called $N$-\textit{reduced} in \cite{N1}.
The phone cord moves which yield this degeneracy condition are related to the
one we chose through a combination of Reidemeister II and III moves together
with blackboard-framed I moves, so it follows that such alternative degenerate
cocycles are homologous to cocycles in $C^R_{\ast}$. In particular, we
have:

\begin{lemma}\label{phixx}
\textup{If $\phi\in H^{2}_R(X)$ then $\phi(x,x)=\phi(x,x\tr x)$.}
\end{lemma}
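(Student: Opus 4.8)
The plan is to exploit the fact, stated in the discussion preceding Lemma~\ref{phixx}, that the ``$N$-reduced'' chains of the form $\sum_{k=1}^N(x_1,\dots,x_{i-1},x_i^{\tr k},x_i^{\tr k},x_{i+2},\dots,x_n)$ give rise to cocycles that are homologous, inside $C^\ast_R(X)$, to cocycles supported on the $N$-degenerate subcomplex $C^{ND}_\ast(X)$. Specializing this to $N=1$ (here we do not need $X$ to be a quandle in general, but the relevant identity only has content when the relevant kink iterates are nontrivial) and to $2$-chains, one finds that the single chain $(x,x)$ and the single chain $(x,x\tr x)$ differ by an element of the image of $\partial_3$ together with an $N$-degenerate chain. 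Applying a cocycle $\phi\in H^2_R(X)$ — which by definition vanishes on $\partial_3$ and, after passing to $H^2_{R/ND}$ representatives, on degenerate chains — then forces $\phi(x,x)=\phi(x,x\tr x)$.

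More concretely, first I would write down an explicit $3$-chain $c$ whose boundary, after reduction, equates $(x,x\tr x)$ with $(x,x)$. The natural candidate is $c=(x,x,x)$ or a small linear combination involving $(x,x,x)$ and $(x\tr x, x, x)$-type terms; computing $\partial_3(x,x,x)$ using the differential formula gives $-\big[(x,x)-(x\tr x,x)\big]+\big[(x,x)-(x\tr x,x\tr x)\big]$, and with the left-associativity convention and the kink-map identities from the rack axioms (in particular $x\tr x = \pi(x)$ and the consequences recorded just before this lemma, e.g. $u\tr(x\tr x)$ behaviour) one simplifies the right-hand side into a combination of $(x,x)$, $(x,x\tr x)$, and a degenerate $2$-chain. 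Second, I would invoke that $\phi$ is a $2$-cocycle, so $\delta\phi=0$ means $\phi$ annihilates $\partial_3$-images; hence $\phi$ takes equal values on any two $2$-chains differing by such an image.

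The step I expect to be the main obstacle is bookkeeping the degenerate correction terms correctly: $\partial_3(x,x,x)$ will not land exactly on $(x,x)-(x,x\tr x)$ but on that plus a term of the form $(x\tr x, x\tr x)$ or $(x,x^{\tr 2})$, and one must check that this leftover is genuinely $N$-degenerate (so that it is killed when $\phi$ is taken in $H^2_{R/ND}$, as the lemma implicitly assumes via the preceding paragraph) rather than an uncontrolled term. This is where the identity $x^{\tr k}\tr x^{\tr(k+1)} = x^{\tr k}\tr x^{\tr k}$ and the telescoping $\sum_{k=1}^N\big[(x,x_i^{\tr(k+1)})-(x,x_i^{\tr k})\big]=0$ — exactly the manipulations used in the proof of the main theorem above — come back into play. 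Once those cancellations are verified, the conclusion $\phi(x,x)=\phi(x,x\tr x)$ is immediate. An alternative, perhaps cleaner, route is purely geometric: realize $(x,x)$ and $(x,x\tr x)$ as the characteristic $2$-chains of two kink tangles related by Reidemeister~II/III moves and blackboard-framed~I moves (as asserted in the paragraph before the lemma), and then invoke invariance of the Boltzmann weight under exactly those moves — but that argument ultimately unwinds to the same chain-level identity, so I would present the algebraic version for self-containedness.
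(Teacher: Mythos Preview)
Your proposal has a genuine gap rooted in a misreading of the hypothesis. The lemma assumes only $\phi\in H^2_R(X)$, not $\phi\in H^2_{R/ND}(X)$; there is no assumption that $\phi$ annihilates degenerate chains, and indeed the whole purpose of this lemma is to feed into the subsequent corollary comparing the two degenerate subcomplexes. So any argument that leans on ``$\phi$ kills the leftover degenerate correction term'' is circular here.

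Relatedly, your choice of $3$-chain is not the right one. Computing $\partial_3(x,x,x)$ yields $(x\tr x,x\tr x)-(x\tr x,x)$, which gives the identity $\phi(\pi(x),\pi(x))=\phi(\pi(x),x)$, i.e.\ $\phi(y,y)=\phi(y,y\tr^{-1}y)$ after reparametrizing by the bijection $\pi$; this is the inverse-kink version, not the statement you want, and no amount of degenerate bookkeeping will bridge the gap without the missing hypothesis. The paper's proof instead evaluates the cocycle condition at the triple $(x\tr^{-1}x,\,x,\,x)$: in chain language this is exactly computing $\partial_3(x\tr^{-1}x,x,x)$, which equals $(x,x\tr x)-(x,x)$ on the nose, with \emph{no} residual terms. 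Applying $\phi$ and using $\delta\phi=0$ gives $\phi(x,x\tr x)=\phi(x,x)$ immediately. So the argument is a one-line substitution once you pick the correct $3$-chain; the $N$-degenerate machinery, telescoping sums, and the geometric Boltzmann-weight argument are all unnecessary (and the latter would in any case be circular, since the equivalence of the two phone-cord moves via Reidemeister II/III is what motivates the lemma in the first place).
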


\begin{proof}
First note that since $\phi$ is a 2-cocycle we have
\[\phi(a,c)-\phi(a\tr b,c)-\phi(a,b)+\phi(a\tr c,b\tr c) =0\]
for all $a,b,c\in X$. Then in particular, setting $a=x\tr^{-1}x$, $b=x$ and 
$c=x$ we have
\begin{eqnarray*}
0 & = & \phi(x\tr^{-1}x,x)-\phi((x\tr^{-1} x)\tr x,x)-\phi(x\tr^{-1}x,x)
+\phi((x\tr^{-1}x)\tr x,x\tr x) \\
& = & \phi(x\tr^{-1}x,x)-\phi(x,x)-\phi(x\tr^{-1}x,x)
+\phi(x,x\tr x) \\
& = &-\phi(x,x)+\phi(x,x\tr x).
\end{eqnarray*}
\end{proof}

\begin{corollary}
Let $X$ be a rack of rack rank $1\le N<\infty$ and let
$C^{ND'}_n(X)$ be the set of linear combinations of chains of the form
\[\sum_{k=1}^N 
(x_1,\dots, x_{i-1},x_i^{\tr k}, x_i^{\tr k}, x_{i+2}, \dots, x_n).\]
Then $C_*^{ND'}(X)$ is also a subcomplex of $C^R_*(X)$ and the quotient
complexes $C^{R/ND}_*(X)$ and $C^{R/ND'}_*(X)$ are isomorphic.
\end{corollary}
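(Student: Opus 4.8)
The plan is to establish the two assertions in turn. First, that $C^{ND'}_*(X)$ is a subcomplex of $C^R_*(X)$: I would apply $\partial_n$ to a generator $\mathbf{x}=\sum_{k=1}^N(x_1,\dots,x_{i-1},x_i^{\tr k},x_i^{\tr k},x_{i+2},\dots,x_n)$ and sort the resulting terms by the index $j$ of the coordinate being deleted, exactly as in the proof that the $N$-degenerate chains form a subcomplex. The algebraic facts I would use are: (a) the kink map $\pi(x)=x\tr x$ is a rack automorphism that commutes with every right translation, so that $x_i^{\tr k}\tr x_j=(x_i\tr x_j)^{\tr k}$; (b) $u\tr x_i^{\tr k}=u\tr x_i$ for every $u\in X$ (this is $[15,\text{ Lemma }1]$, already invoked in that earlier proof); and (c) the periodicity $x_i^{\tr(N+1)}=x_i^{\tr 1}$. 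Using (a), each term with $j\le i-1$ or $j\ge i+2$ stays in the $ND'$ form, because the two coordinates in positions $i,i+1$ remain equal after the (possible) action by $x_j$ and, reindexed via $y=x_i\tr x_j$, have the shape $(\dots,y^{\tr k},y^{\tr k},\dots)$ with all other coordinates fixed. For the two remaining terms $j=i$ and $j=i+1$, I would use (b) to replace $u\tr x_i^{\tr k}$ by $u\tr x_i$ and the identity $x_i^{\tr k}\tr x_i^{\tr k}=x_i^{\tr(k+1)}$; the four chains that appear then pair off, and the surviving telescoping sum $\sum_{k=1}^N\big[(\dots,x_i^{\tr(k+1)},\dots)-(\dots,x_i^{\tr k},\dots)\big]$ vanishes by (c). This is in fact slightly shorter than the proof for the $N$-degenerate chains, since the two distinguished coordinates are equal from the start.

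For the isomorphism of quotient complexes, the model is a degreewise bijection built from the kink map. In degree $2$, let $\Psi_2\colon C^R_2(X)\to C^R_2(X)$ be the involution that, for each $a\in X$ with $a\tr a\ne a$, transposes the two basis generators $(a,a)$ and $(a,a\tr a)$ and fixes every other generator. Since $\partial_2(a,a)=(a)-(a\tr a)=\partial_2(a,a\tr a)$ (using the rack identity $u\tr(v\tr v)=u\tr v$), the map $\Psi_2$ commutes with $\partial_2$ when paired with $\Psi_1=\mathrm{id}$; and because $x^{\tr(k+1)}=x^{\tr k}\tr x^{\tr k}$, it carries the generator $\sum_{k=1}^N(x^{\tr k},x^{\tr(k+1)})$ of $C^{ND}_2(X)$ onto the generator $\sum_{k=1}^N(x^{\tr k},x^{\tr k})$ of $C^{ND'}_2(X)$, hence restricts to an isomorphism $C^{ND}_2(X)\to C^{ND'}_2(X)$ and descends to an isomorphism of the degree-$2$ quotients. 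I would then extend this to all degrees by performing, for each adjacent pair of positions, the analogous transposition — replacing a coordinate that equals the kink of its left neighbour by that left neighbour, and vice versa — with the moves applied from the rightmost pair to the leftmost so that each one sees the unmodified coordinates, and check that the resulting $\Psi_n$ is a chain automorphism of $C^R_*(X)$ taking $C^{ND}_*(X)$ onto $C^{ND'}_*(X)$.

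The main obstacle is this last step. A naive ``diagonal'' map applying powers of $\pi$ coordinate by coordinate cannot do the job: turning $(x^{\tr k},x^{\tr(k+1)})$ into $(x^{\tr k},x^{\tr k})$ would require shifting the two coordinates by different powers of $\pi$, and a short computation shows that this is incompatible with the way $\partial$ re-indexes coordinates after a deletion, so the isomorphism must genuinely be of the transposition type above. The real work is then combinatorial: one must check that the overlapping adjacent transpositions are consistently defined on the $N$-degenerate generators (several positions can be simultaneously ``kink-related'' by accident), that $\Psi_n$ commutes with $\partial_n$ in every degree, and that it is a bijection. Should a clean full chain isomorphism prove elusive, the statement actually used later — that the reduced second cohomology groups coincide, $H^2_{R/ND}(X;G)=H^2_{R/ND'}(X;G)$, which underlies the well-definedness of the cocycle invariant — follows immediately from Lemma \ref{phixx}: a $2$-cocycle annihilates $\sum_k(x^{\tr k},x^{\tr k})$ exactly when it annihilates $\sum_k(x^{\tr k},x^{\tr(k+1)})$, so the two quotient complexes have the same $2$-cocycles, while every $2$-coboundary already annihilates both subcomplexes, so the coboundary groups agree as well.
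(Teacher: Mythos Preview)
Your Part~1 argument is correct and parallels the proof of Theorem~2 exactly as you describe; the key identities $(x\tr x)\tr y=(x\tr y)\tr(x\tr y)$ and $u\tr x_i^{\tr k}=u\tr x_i$ make the $j<i$, $j>i+1$, and $j\in\{i,i+1\}$ cases go through, and the telescoping in the middle case is indeed slightly cleaner than in the $ND$ proof.

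For Part~2, the paper takes a different route: it does not attempt to write down an explicit chain isomorphism at all. Instead it argues \emph{topologically} that the two oriented $N$-phone cord moves defining $C^{ND}_*$ and $C^{ND'}_*$ are related through a sequence of Reidemeister~II, III, and blackboard-framed~I moves, so that the induced quotient theories agree; Lemma~\ref{phixx} is then an algebraic witness of this in degree~$2$. This is cheap but also somewhat soft---it justifies equality of the cocycle invariants and of $H^2_{R/ND}$ with $H^2_{R/ND'}$, but does not literally produce a chain isomorphism in all degrees any more than your sketch does.

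Your degree-$2$ map $\Psi_2$ is correct and is essentially the chain-level content of Lemma~\ref{phixx}. The genuine gap, which you flag yourself, is the extension to $n\ge 3$: the ``apply the adjacent transposition at each pair of positions, rightmost to leftmost'' recipe is not obviously well-defined on arbitrary basis chains (several adjacent pairs can be kink-related simultaneously, and the moves interact), and checking $\Psi_{n-1}\partial_n=\partial_n\Psi_n$ requires controlling how a deletion at position~$j$ reshuffles which pairs are kink-related. This is not obviously fixable by the transposition idea alone. Your fallback via Lemma~\ref{phixx} is exactly what the paper relies on for the application, so in practical terms you and the paper end up in the same place: a full chain-level isomorphism is asserted but not exhibited, while the degree-$2$ cohomological statement that is actually used is proved.
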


\begin{remark}
\textup{It is noted in \cite{CJKLS} that 2-coboundaries have a Boltzmann 
weight contribution of zero, and hence cohomologous cocycles define the 
same invariant. We note that the exactly the same proof applies in this 
more general setting, and we have}
\[\phi_1 \mathrm{\ cohomologous \ to\ }\phi_2 \quad
\Rightarrow\quad \Phi_{\phi_1}=\Phi_{\phi_2}.\]
\end{remark}

\section{\large\textbf{Computations and examples}}

In this section we collect a few examples of the rack cocycle invariants. 
These examples were computed using \texttt{python} code available at
\texttt{www.esotericka.org} with signed Gauss codes transcribed by hand
from diagrams at the Knot Atlas \cite{KA} and checked with \texttt{Maple}.

\begin{example}\textup{
Let $X$ be the rack with rack matrix given by
\[M_{X}=\left[\begin{array}{ccccc}
1 & 3 & 2 & 1 & 1 \\
3 & 2 & 1 & 2 & 2 \\ 
2 & 1 & 3 & 3 & 3 \\ 
4 & 4 & 4 & 5 & 5 \\ 
5 & 5 & 5 & 4 & 4 \\
\end{array}\right].\]
Via computations in \texttt{python}, we selected at random a reduced 
2-cocycle $\phi\in H^2_{R/ND}(X;\mathbb{Z}_4)$ given by
\begin{eqnarray*}
\phi & = & \left(\chi_{1,3}+\chi_{3,2}+\chi_{5,4}+\chi_{5,5}\right) 
 +2\left(\chi_{1,1}+\chi_{2,2}+\chi_{3,3}\right) \\
& & \quad +3\left(\chi_{1,2}+\chi_{1,4}+\chi_{1,5}+\chi_{2,3}+\chi_{2,4}+\chi_{2,5}+\chi_{3,4}+\chi_{3,5}+\chi_{4,4}+\chi_{4,5}\right)\end{eqnarray*}
and computed $\Phi_{\phi}(L)$ for all prime classical knots with up to eight 
crossings and prime classical links with up to seven crossings. The results
are collected in the table below; note that multiple entries with equal
rack counting invariant are distinguished by the cocycle enhancement.}

\[\begin{array}{r|l}
\Phi_{\phi}(L) & L \\ \hline
5 + 3u^2 & 4_1, 5_1, 5_2, 6_2, 6_3, 7_1, 7_2, 7_3, 7_5, 7_6, 8_1, 8_2, 8_3, 8_4,
8_6, 8_7, 8_8, 8_9, 8_{12}, 8_{13}, 8_{14}, 8_{16}, 8_{17} \\
11+9u^2 & 3_1, 6_1, 7_4, 7_7, 8_5, 8_{10}, 8_{11}, 8_{15}, 8_{19}, 8_{20}, 8_{21} \\
29+27u^2 & 8_{18} \\
10 + 12u+ 6u^2 + 12u^3 & L2a1, L6a2, L7a6 \\
22 + 12u + 18u^2 + 12u^3 & L6a3, L7a5 \\
22+18u^2 & L4a1, L5a1, L7a4 \\
34+30u^2 & L6a1, L7a1, L7a2, L7a3, L7n1, L7n2 \\
92+84u^2 & L6n1, L7a7 \\
116 + 108u^2 & L6a5 \\
164 + 156u^2 & L6a4 \\
\end{array}\]

\end{example}

\begin{example}\textup{
Let $X$ be the rack with rack matrix given by
\[M_{X}=\left[\begin{array}{cccccc}
2 & 2 & 1 & 1 & 1 & 1 \\
1 & 1 & 2 & 2 & 2 & 2 \\
3 & 3 & 3 & 5 & 6 & 4 \\
4 & 4 & 6 & 4 & 3 & 5 \\ 
5 & 5 & 4 & 6 & 5 & 3 \\
6 & 6 & 5 & 3 & 4 & 6
\end{array}\right]\]
and let $\phi$ be the reduced
2-cocycle $\phi\in H^2_{R/ND}(X;\mathbb{Z}_4)$ given by}
\begin{eqnarray*}
\phi & = & \chi_{1,3}+\chi_{1,4}+\chi_{1,5}+\chi_{1,6}+\chi_{2,3}+\chi_{2,4}+\chi_{2,5}+\chi_{2,6}+\chi_{3,1}+\chi_{3,2}+\chi_{3,4}+\chi_{3,5} +\chi_{4,1} +\chi_{4,2}
 \\ & &
+\chi_{4,3}+\chi_{5,1}+\chi_{5,2}+\chi_{5,6}+\chi_{6,1}+\chi_{6,2}+\chi_{6,4}+\chi_{6,5} +2\left(\chi_{4,5}+\chi_{5,4}\right)
 +3\left(\chi_{4,6}+\chi_{5,3}\right);
\end{eqnarray*}
\textup{The invariant values are listed in the table below. Unlike the previous 
example, this example includes single-component knots which are distinguished
by the cocycle enhancement but not the counting invariant.}

\[\begin{array}{r|l}
\Phi_{\phi}(L) & L \\ \hline
10 & 5_1,5_2,6_1,6_2,6_3,7_1,7_4,7_5,7_6,7_7,8_2,8_3,8_6,8_7,8_8,8_9,8_{10},8_{12},
8_{14},8_{16},8_{17} \\
10+24u^2 & 3_1, 4_1,7_2,7_3,8_1,8_4,8_{11},8_{13},8_{18} \\
34 & 8_5,8_{15},8_{19},8_{20},8_{21} \\
34+96u^2 & 8_{18} \\
52 & L5a1, L6a1 \\
100 & L4a1, L7a4 \\
232 & L6a4 \\
20+32u^2 & L2a1, L6a2, L7a5, L7a6 \\
52+48u^2 & L7a1, L7a2, L7a3, L7n1, L7n2 \\
68+32u^2 & L6a3 \\
104+128u^2 & L6a5 \\
232+96u^2 & L6n1, L7a7 \\
\end{array}\]

\end{example}

The previous examples both use racks which are disjoint unions of 
one rack and one quandle, with each orbit acting trivially on the 
other. For our final example, we use a more ``pure rack'' with no
elements of rack rank $1$.

\begin{example}
\textup{Consider the rack with rack matrix}
\[M_{X}=\left[\begin{array}{cccc}
2 & 2 & 1 & 1 \\
1 & 1 & 2 & 2 \\
4 & 4 & 4 & 4 \\
3 & 3 & 3 & 3 \\
\end{array}\right]\]
\textup{and let $\phi\in H^2_{R/ND}(X;\mathbb{Z}_6)$ be given by}
\[\phi=\chi_{2,1}+\chi_{2,2}
+3\left(\chi_{3,3}+\chi_{3,4}+\chi_{4,3}+\chi_{4,4}\right)
+5\left(\chi_{1,1}+\chi_{1,2}+\chi_{3,1}+\chi_{3,2}+\chi_{4,1}+\chi_{4,2}\right)
\]
\textup{The invariant has value $\Phi_{\phi}(K)=4$ for all of the knots
$K$ in our list; however, the invariant is nontrivial on links:}
\[\begin{array}{r|l}
\Phi_{\phi}(L) & L \\ \hline
16 & L5a1, L6a3, L7a1, L7a3, L7a4, L7n2 \\
8+8u^2 & L2a1, L7a5, L7a6, L7n1 \\
8+8u^4 & L4a1, L6a1, L7a2 \\
12+4u^4 & L6a2 \\
16+48u^4 & L6a4, L6n1 \\
48+16u^4 & L6a5, L7a7
\end{array}\]

\end{example}

\section{A Compendium of Questions}
We conclude the paper by the following remarks and open questions:

\begin{list}{$\bullet$}{}
\item{In \cite{CES1} a homology theory was developed for set-theoretic 
Yang-Baxter equations and used to define invariants of classical and 
virtual knots.  This approach was used in \cite{CN} and extended to detect 
non-classicality of some virtual links. Within these lines generalize the 
cohomology theory in this paper to biracks and use it study invariants of 
classical and virtual knots.}
\item{How can we extend these rack cocycle invariants to surface knots
and links, or more generally, knotted compact oriented $n$-manifolds 
in $S^{n+2}$?} 
\item{Recent work (e.g., \cite{PN}) has found that quandle homology groups
sometimes have additional algebraic structure. How can we use these structures 
to further enhance the rack 2-cocycle invariants?}
\item{More generally, what kinds of patterns (long exact sequecnes, etc.) 
exist in the $R/ND$ homology groups of racks of various types?}
\item{Our computer experiments suggest that small cardinality racks define
2-cocycle invariants which are stronger on links than on knots, due to the
orbit subracks being fairly trivial constant action racks. Fast algorithms
for computing the homology of large-cardinality racks should improve the
practical utility of the rack cocycle invariants.}
\item{Given any rack $X$, there are a number of quandles associated to $X$
such as the \textit{maximal subquandle} of $X$ consisting of all elements
of $X$ of rack rank $0$ (this may be the empty quandle) and the 
\textit{operator quandle} obtained by taking the quotient of $X$ by the 
congruence $\{x\sim y \iff z\tr x=z\tr y\ \forall z\in X.\}$ What 
relationship, if any, can be found between the quandle homology of these 
quandles and the $R/ND$ homology of $X$?}
\end{list}

\noindent
\textsc{Department of Mathematics, \\
University of South Florida, \\
4202 E Fowler Ave., \\
Tampa, FL 33620}\\\\
and\\\\
\textsc{Department of Mathematics, \\
Claremont McKenna College, \\
850 Columbia Ave., \\
Claremont, CA 91711}


\begin{thebibliography}{10}

\bibitem{AG}{ N. Andruskiewitsch, and M. Gra\~na  
From racks to pointed Hopf algebras.
\textit{Adv. Math.}  \textbf{178}  (2003),  no. 2, 177--243. }

\bibitem{KA}{D. Bar-Natan, Ed. The Knot Atlas, 
\texttt{http://katlas.math.toronto.edu/wiki/Main\underline{\ }Page}}
 
\bibitem{CEGS}{S. Carter,  M. Elhamdadi, M. Gra\~na and M. Saito.  
Cocycle knot invariants from quandle modules and generalized quandle homology.
\textit{  Osaka J. Math.}  \textbf{42}  (2005),  no. 3, 499--541.}

\bibitem{CES1}{S. Carter,  M. Elhamdadi, and M. Saito.  
Homology theory for the set-theoretic Yang-Baxter equation and knot invariants 
from generalizations of quandles. 
\textit{   Fund. Math.}  \textbf{184}  (2004),  31--51.}


\bibitem{CES}{S. Carter,  M. Elhamdadi, and M. Saito. 
Twisted quandle homology theory and cocycle knot invariants.
\textit{ Algebr. Geom. Topol.}    \textbf{2}  (2002), 95--135.}

\bibitem{CJKLS}{S. Carter,  D. Jelsovsky, S. Kamada, L. Langford and M. Saito. Quandle cohomology and state-sum invariants of knotted curves and surfaces.
\textit{ Trans. Amer. Math. Soc.}  \textbf{355}  (2003),  no. 10, 3947--3989.}

\bibitem{CKS}{S. Carter,  S. Kamada and M. Saito.  Surfaces in 4-space.
\textit{Encyclopaedia of Mathematical Sciences,}  
\textbf{142}. Low-Dimensional Topology, III. Springer-Verlag, Berlin, 2004.}
 
\bibitem{CN}{J. Ceniceros and S. Nelson.  
Virtual Yang-Baxter Cocycle Invariants.
\textit{Trans. Amer. Math. Soc.}  \textbf{361}  (2009),  no. 10, 5263--5283.}

\bibitem{EG}{P. Etingof and M. Gra\~na.  
On rack cohomology.
\textit{ J. Pure Appl. Algebra}  \textbf{177}  (2003),  no. 1, 49--59. }
  
\bibitem{FRS2} R. Fenn; C. Rourke and B. Sanderson,  
The rack space.
\textit{ Trans. Amer. Math. Soc.}  \textbf{359},  (2007),  no. 2, 701--740. 
 
\bibitem{FRS1} R. Fenn; C. Rourke and B. Sanderson,  
James bundles and applications.
\textit{ Proc. London Math. Soc.}  \textbf{3},  89  (2004),  no. 1, 217--240. 
 
\bibitem{FR}{R. Fenn and C. Rourke. 
Racks and links in codimension two.
\textit{J. Knot Theory Ramifications}  \textbf{1}  (1992) 343-406.}

\bibitem{LN}{R. Litherland and S. Nelson. 
The Betti numbers of some finite racks.
\textit{  J. Pure Appl. Algebra}  \textbf{178}  (2003),  no. 2, 187--202. }
  

\bibitem{AdamN} {A. Navas, and S. Nelson. On symplectic quandles.
\textit{Osaka J. Math.}  \textbf{45}  (2008),  no. 4, 973--985.}

\bibitem{N1}{S. Nelson. 
Link invariants from finite racks.
arXiv:0808.0029}


\bibitem{PN}{M. Niebrzydowski and J. Przytycki. The second quandle 
homology of the Takasaki quandle of an odd abelian group is an exterior
square of the group.  arXiv:1006.0258 }

\end{thebibliography}
\end{document}